\documentclass[12pt]{article}
\usepackage{amssymb,url}
\newtheorem{theo}{Theorem}[section]
\newtheorem{prop}[theo]{Proposition}
\newtheorem{lemma}[theo]{lemma}
\newtheorem{cor}[theo]{Corollary}

\newtheorem{unnumber}{}

\newtheorem{prepf}[unnumber]{Proof}
\newenvironment{proof}{\prepf\rm}{\endprepf}

\newcommand{\qed}{\qquad$\Box$}

\begin{document}

\title{Graphs defined on algebras}
\author{Peter J. Cameron, University of St Andrews}
\date{}
\maketitle

\begin{abstract}
There has been a great deal of attention recently to graphs whose vertex set is a group, defined using the group structure. (The \emph{commuting graph}, where
two elements are joined if they commute, is the oldest and most famous
example.)

The purpose of this paper is to investigate extending the definitions of such
graphs to general algebras (in the sense of universal algebra). It seems
unlikely that such a definition can be made for the commuting graph, or for
various others such as the nilpotency and Engel graphs. However, for graphs
whose definition depends on the notion of subgroup or subalgebra generated
by a subset, the existing definitions
work without change. These graphs include several well-studied examples: the
\emph{power graph}, \emph{enhanced power graph}, \emph{generating graph},
\emph{independence graph}, and \emph{rank graph}. In these cases, some 
results about groups extend to arbitrary algebras unchanged, but others require
specific properties of groups, and pose a challenge to researchers.

In the next two sections, I will describe some extensions to directed graphs
(the directed power graph and the endomorphism digraph) and to simplicial
complexes (the independence and strong independence complexes). The final
section gives explicit descriptions of all of these objects for independence
algebras.
\end{abstract}

\medskip

\textbf{MSC classification:} 05C25, 05E45, 08A99, 20B25

\textbf{Keywords:} power graph, independence graph, algebras, perfect graph,
cograph

\section{Introduction}

This paper could be summarised as: Here is some interesting combinatorics
related to finite groups; can it be extended to wider classes of algebras?

Graphs and groups have a long common history, Cayley graphs having been 
introduced in the nineteenth century; these graphs are the basic objects in
geometric group theory as well as a central topic in algebraic combinatorics.
But my topic is not Cayley graphs, which depend on both the group structure
and on the choice of a subset of the group as ``connection set''.

Rather, I am concerned with graphs whose vertex set is a group, or possibly
some ``natural'' (isomorphism-invariant) subset or partition of the group,
and whose joining relation depends only on algebraic properties of the group.
(This implies that the automorphism group of the group acts as automorphisms
of the graph.)

The oldest example of this appears in the paper of Brauer and 
Fowler~\cite{commuting} in 1955. They showed that a simple group of even order
has order bounded by a function of the order of an involution
centraliser. This could be regarded as the first step towards the
classification of finite simple groups. Though graphs are not mentioned
explicitly in the paper, the arguments involve bounding the diameter of the
\emph{commuting graph}, whose vertices are the non-identity elements, two
vertices adjacent of they commute.

The commuting graph of a group has received a lot of attention, and the study
has recently been extended to semigroups~\cite{abk,paulista}. But it does
require, at a minimum, the existence of a distinguished binary operation.
Other graph types discussed in \cite{gog} include the nilpotency,
solubility and Engel graphs, which are specific to groups.

However, there are other graphs defined on groups whose definition depends
only on the notions of subgroup generated by a subset or of endomorphisms,
and these extend naturally to any algebra (in the sense of universal algebras,
that is, sets with prescribed families of operations of various arities).
These include the power graph and two variants, generating graph, 
independence graph, and rank graph, which will be discussed below. The next
two sections discuss directed graphs and simplicial complexes, and the final
section gives details of these structures on independence algebras.

Since group theory is the richest part of abstract algebra, we begin with
graphs on groups and ask which of their properties can be extended to
wider classes of algebras.

All groups and algebras considered here are finite. There are some results
for infinite groups and graphs but the theory is less well developed.

\section{Preliminaries}

\subsection{Graph classes}

Given various types of graphs defined on algebras, there are two main
ways in which we can specify a class of algebras:
\begin{itemize}
\item we could specify two types of graph $\Gamma_1$ and $\Gamma_2$, and ask:
for which algebras $A$ does $\Gamma_1(A)=\Gamma_2(A)$?
\item we could specify a class $\mathcal{C}$ of graphs, and a graph type
$\Gamma$ on algebras, and ask: for which algebras $A$ does $\Gamma(A)$ belong
to $\mathcal{C}$?
\end{itemize}

For the second point, there are several important classes of graphs, closed
under taking induced subgraphs, and hence having a characterisation by
forbidden induced subgraphs. (An \emph{induced subgraph} of a graph is obtained
by taking a suset of the vertex set together with all the edges it contains.)
In most cases, these also have a more structural description. Here are some of
those I will consider.

\medskip\noindent
\textbf{Perfect graphs:} A graph $\Gamma$ is perfect if every induced
subgraph has clique number equal to chromatic number. (The clique number is
the largest number of vertices in a complete subgraph, and the chromatic
number is the smallest number of colours required to colour the vertices so
that adjacent vertices have different colours.) By the \emph{Strong Perfect
Graph Theorem}~\cite{crst}, a graph $\Gamma$ is perfect if and only if it does
not contain a cycle of length $n$, where $n$ is odd and greater than $3$, or
the complement of one, as an induced subgraph.

\medskip\noindent
\textbf{Chordal graphs:} A graph $\Gamma$ is chordal if every cycle of
length greater than $3$ has a chord; that is, cycles of length $n>3$ are
forbidden induced subgraphs.

\medskip\noindent
\textbf{Cographs:} This class has many characterisations, and has been
rediscovered (and given different names) several times. A graph $\Gamma$ is a
\emph{cograph} (or \emph{complement-reducible graph}) if for every induced
subgraph $\Delta$, either $\Delta$ or its
complement is disconnected. A graph is a cograph if and only if it does not
contain the $4$-vertex path as an induced subgraph.

\medskip\noindent
\textbf{Split graphs:} A graph is \emph{split} if and only if the vertex set is
the union of two subsets, one of which induces a complete graph and the other
a null graph. A graph is split if and only if it does not contain the $4$- or
$5$-vertex cycle or two disjoint edges as induced subgraph.

\medskip\noindent
\textbf{Threshold graphs:} A graph is \emph{threshold} if there are
vertex weights $\mathrm{wt}(v)$ on the vertices and a threshold $t$ such that
$v$ and $w$ are joined if and only if $\mathrm{wt}(v)+\mathrm{wt}(w)\ge t$.
A graph is threshold if and only if it does not contain a path or cycle on
$4$ vertices, or two disjoint edges, as induced subgraph.

\medskip
For other connections between groups and the type of graphs considered here,
see~\cite{gandg}.

\subsection{Algebras}

An \emph{$n$-ary operation} on a set $A$ is a function from $A^n$ to $A$. For
$n=0,1,2$, we say ``nullary'', ``unary'' and ``binary'' respectively. A
nullary operation takes no arguments and returns an element of $A$; elements
which arise in this way are called \emph{constants}.

An algebra (in the sense of universal algebra) is a set with a collection of
operations of various arities. Usually we regard these operations as named,
so we can speak of a collection of algebras with the same operations. Thus,
a group is an algebra with one binary operation (multiplication), one unary
operation (inversion) and one nullary operation (identity).

A subalgebra of an algebra $A$ is a subset of $A$ which is closed under all
the operations, and so is itself an algebra. The intersection of subalgebras
is a subalgebra; so we can define the algebra generated by a subset $S$ of $A$
to be the intersection of all the subalgebras of $A$ containing $S$. It can
alternatively be defined as the smallest subset containing $S$ and closed under
all the operations. We denote it by $\langle S\rangle$. The \emph{rank} of an
algebra $A$ is the cardinality of the smallest generating set; we denote it by
$r(A)$. An algebra is \emph{monogenic} if its rank is $1$. We denote ``$B$ is a
subalgebra of $A$'' by $B\le A$.

An algebra $A$ has a unique minimal subalgebra, the intersection of all the
subalgebras of $A$. It is the subalgebra generated by the constants; we denote
it by $E(A)$. If there are no nullary operations, then $E(A)$ is empty.

There are several properties of algebras which will be used later. An algebra
is \emph{monotonic} if $B_1\le B_2\le A$ implies $r(B_1)\le r(B_2)$. An
algebra is \emph{$1$-monotonic} if its rank~$1$ subalgebras are monotonic
(this means that any nontrivial subalgebra of a rank~$1$ subalgebra has
rank~$1$). Groups have this property since a subgroup of a cyclic group is
cyclic. However, it does not hold in general.

\noindent\textbf{Example} Let $A$ be the semigroup generated by an element
$x$ such that $x^4=x^5$ (so that $x^4$ is a zero). Then the subsemigroup
generated by $x^2$ and $x^3$ has rank~$2$ since neither of these elements
is a power of the other.

\subsection{Graphs on groups and algebras}

Here I define various graphs which have been defined on groups in the
literature, but whose definition extends to algebras.

\medskip\noindent
\textbf{The power graph} This was defined by Kelarev and Quinn~\cite{kq}.
The power graph of a group $G$ has vertex set $G$, with an edge joining $x$
and $y$ if either $x$ is a power of $y$, or $y$ is a power of $x$. This is
not in the right form as it stands, but is easily translated; the \emph{power
graph} of an algebra $A$ has vertex set $A$, with $x$ joined to $y$ if either
$x\in\langle y\rangle$ or $y\in\langle x\rangle$. (Kelarev and Quinn themselves
studied power graphs and digraphs of semigroups~\cite{kq2}.)

\medskip\noindent
\textbf{The enhanced power graph} This graph was defined by Aalipour
\emph{et al.} \cite{aacns}, although Abdollahi and 
Hassanabadi~\cite{ah} had earlier considered the complementary graph. Two
elements $x$ and $y$ are joined if there is an element $z$ such that both $x$
and $y$ are powers of $z$; equivalently,
$\langle x,y\rangle=\langle z\rangle$ for some element $z$.

Translating this to arbitrary algebras gives an ambiguity, since the two
forms given are not equivalent in general: the first asserts that
$\langle x,y\rangle\le\langle z\rangle$ for some $z$. So there are two
possible definitions of the enhanced power graph:
\begin{itemize}
\item[(A)] $x\sim y$ if $\langle x,y\rangle$ has rank~$1$;
\item[(B)] $x\sim y$ if $x,y\in\langle z\rangle$ for some $z$.
\end{itemize}
I will call these the \emph{strict} and \emph{loose} definitions. They are
equivalent if the algebra is $1$-monotonic (for example, a group). For the
semigroup at the end of the last subsection, $x^2\sim x^3$ in the loose
definition (B) but not in the strict definition (A).

There are just a few places in this paper where it makes a difference; and, in
most of these, the loose definition gives nicer results. So I will adopt it
throughout.

\medskip\noindent
\textbf{The intersection power graph} This was defined more recently, by
Sudip Bera in 2018. The intersection power graph of a group $G$ has vertex set
$G$, with $x$ joined to $y$ if there is a non-identity element $z$ which is a
power of both $x$ and $y$. Also, by convention, the identity is joined to
everything. So, in an arbitrary algebra $A$, we join two elements $x$ and $y$
if $\langle x\rangle\cap\langle y\rangle$ properly contains $E(A)$, and join
elements of $E(A)$ to all other vertices.

\medskip\noindent
\textbf{The generating graph} Two elements $x$ and $y$ are joined in the 
\emph{generating graph} of $A$ if $\langle x,y\rangle=A$. (This graph has no
edges if $A$ is not $2$-generated. But for groups, it derives much of its
importance from the fact that any non-abelian finite simple group is
$2$-generated.)

\medskip\noindent
\textbf{The independence graph} This graph and the next were devised for
groups by Andrea Lucchini~\cite{lucchini} to get around the fact that not all
groups are $2$-generated. Two elements $x$ and $y$ of $A$ are joined in the
\emph{independence graph} if there is a minimal generating set for $A$
containing $x$ and $y$.

\medskip\noindent
\textbf{The rank graph} Similarly, $x$ and $y$ are joined in the
\emph{rank graph} if there is a generating set of minimum cardinality
containing $x$ and $y$.

\medskip

For each of these graphs, the automorphism group of the algebra acts as
automorphisms of the graph. Also, since they are all determined by the 
subalgebras of the algebra, we see that if two algebras (possibly with different
sets of operations) have a bijection between then mapping subalgebras to
subalgebras in both directions, then all of the above graph types give
isomorphic graphs on the two algebras.

The power graph of an algebra $A$ has the further property that induced
subgraph on a  subalgebra $B$ of $A$ is the power graph of $B$. Thus, given a
graph class $\mathcal{C}$, the class of algebras whose power graph
belongs to $\mathcal{C}$ is closed under taking subalgebras. The same is not
true for the enhanced power graph, but would be true if we used the strict
definition of enhanced power graph. In particular, the statement holds if
$A$ is $1$-monotonic.

By contrast, the generating graph has the very different property that the
induced subgraph on a proper subalgebra contains no edges, since two elements
in a proper subalgebra cannot generate the whole algebra.

\section{Results and open problems}

\subsection{Relations between graphs}

The following result is an elementary observation for groups, and indeed the
same arguments prove it for any class of algebras. A \emph{spanning subgraph}
of a graph $\Gamma$ is a graph $\Delta$ with the same vertex set as $\Gamma$
whose edge set is a subset of the edge set of $\Gamma$. The \emph{complement}
of a graph $\Gamma$ has the same vertex set as $\Gamma$; its edge set consists
of all pairs of distinct vertices which are non-edges in $\Gamma$.

\begin{theo} 
\begin{itemize}
\item[(a)] The power graph is a spanning subgraph of both the enhanced power
graph and the intersection power graph.
\item[(b)] The independence graph is a spanning subgraph of the complement of
the power graph.
\item[(c)] The rank graph is a spanning subgraph of the complement of the
enhanced power graph.
\end{itemize}
\end{theo}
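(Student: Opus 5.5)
Each part is a direct check from the definitions in the previous subsection. In each case I take an edge $\{x,y\}$, with $x\ne y$, of the first graph and show that it is an edge of the second. All spanning subgraphs in question live on the common vertex set $A$, so only edges need to be checked. I will use the loose definition (B) of the enhanced power graph, as agreed earlier.

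For (a), suppose $x\sim y$ in the power graph, say $y\in\langle x\rangle$.
\begin{itemize}
\item \emph{Enhanced power graph.} Both $x$ and $y$ lie in $\langle z\rangle$ with $z=x$, so $x\sim y$ under definition (B).
\item \emph{Intersection power graph.} If $y\in E(A)$, then $y$ is joined to everything by convention. Otherwise $y\in\langle x\rangle\cap\langle y\rangle$ but $y\notin E(A)$. Since $E(A)$ is contained in every subalgebra, the intersection properly contains $E(A)$, so $x\sim y$.
\end{itemize}
The only care needed is the separate treatment of elements of $E(A)$, which the convention in the definition handles.

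For (b), suppose $x,y$ lie in a minimal generating set $S$ of $A$, and suppose for contradiction that they are adjacent in the power graph, say $y\in\langle x\rangle$. Since $x\ne y$, we have $x\in S\setminus\{y\}$, so $y\in\langle S\setminus\{y\}\rangle$. Hence $\langle S\setminus\{y\}\rangle$ contains all of $S$ and equals $A$, contradicting the minimality of $S$. So $x,y$ are non-adjacent in the power graph, i.e.\ adjacent in its complement.

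For (c), suppose $x,y$ lie in a generating set $S$ with $|S|=r(A)$, and suppose for contradiction that $x,y\in\langle z\rangle$ for some $z$. Put $S'=(S\setminus\{x,y\})\cup\{z\}$. Then $\langle S'\rangle$ contains $x$, $y$ and the rest of $S$, so $\langle S'\rangle=A$. But $|S'|\le r(A)-1$, contradicting the definition of rank. Hence $x\not\sim y$ in the enhanced power graph.

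None of these steps is a genuine obstacle; the only points needing care are the distinctness $x\ne y$ and the convention on $E(A)$. Part (c) holds a fortiori for the strict definition (A), since its edges form a subset of those of (B).
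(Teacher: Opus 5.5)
Your proposal is correct and follows essentially the same argument as the paper's sketch. For (a) you use the single generator as the witness for the enhanced power graph and split on membership in $E(A)$ for the intersection power graph; for (b) you drop the redundant element from a minimal generating set; and for (c) you replace the pair $x,y$ by $z$ to get a smaller generating set. Your remark that (c) then holds a fortiori under the strict definition is also correct.
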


I sketch the proofs. Suppose that $x$ and $y$ are joined in the power graph,
with (say) $x\in\langle y\rangle$).
\begin{itemize}
\item $x$ and $y$ lie in $\langle y\rangle$, so
$\langle x,y\rangle=\langle y\rangle$, and $x$ and $y$ are
joined in the enhanced power graph.
\item If $x\notin E(A)$, then
$\langle x\rangle\cap\langle y\rangle=\langle x\rangle\supset E(A)$; but if
$x\in E(A)$ then $x$ is joined to all other vertices in the intersection power
graph.
\item $x$ and $y$ cannot both be contained in a mimimal generating set, since
$x$ could be omitted from such a set.
\end{itemize}
Finally, if $x$ and $y$ are joined in the enhanced power graph, with (say)
$x,y\in\langle z\rangle$, then we can replace the pair $x$ and $y$
in a generating set by $z$ to obtain a smaller generating set.

Each of these results raises a natural question: For which algebras does
equality hold?

For parts (b) and (c), this question has been completely answered for groups by
Freedman, Lucchini, Nemmi and Roney-Dougal~\cite{flnr}. All groups whose power
graph is the complement of the independence graph, or whose enhanced power
graph is the complement of the rank graph, are determined in the cited paper;
and indeed all these groups are soluble (although the arguments require
detailed consideration of insoluble groups).

Nothing is known for other classes of algebras.

By contrast, for part (a), there is a structure theorem for such graphs:

\begin{theo}
Let $\Gamma$ be the power graph of an algebra $A$, and suppose that $\Gamma$
is equal to either the enhanced power graph or the intersection power graph
of $A$. Then $\Gamma$ is a cograph and a chordal graph.
\end{theo}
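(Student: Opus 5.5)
The plan is to view the power graph as the comparability graph of a preorder, and to show that either equality forces this preorder to be ``tree-like''. Define $x\preceq y$ if $x\in\langle y\rangle$. This relation is reflexive and transitive: if $x\in\langle y\rangle$ and $y\in\langle z\rangle$, then $\langle y\rangle\le\langle z\rangle$, so $x\in\langle z\rangle$. Two distinct elements are adjacent in the power graph exactly when they are $\preceq$-comparable.

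\emph{Enhanced power graph case.} Using the loose definition, $x\sim y$ whenever $x,y\preceq z$ for some $z$. Equality of the two graphs therefore says that any two elements with a common upper bound are comparable; that is, every principal down-set $\{x:x\preceq z\}$ is a chain.

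\emph{Intersection power graph case.} Elements of $E(A)$ lie in every $\langle y\rangle$, so they are adjacent to everything in both graphs. Now take $x,y\notin E(A)$. If there is $w\notin E(A)$ with $w\preceq x$ and $w\preceq y$, then $\langle x\rangle\cap\langle y\rangle\supseteq\langle w\rangle\ni w$, so the intersection properly contains $E(A)$. Equality then forces $x,y$ to be comparable. Hence on $A\setminus E(A)$ any two elements with a common lower bound are comparable. Reversing the preorder does not change the comparability graph, so on $A\setminus E(A)$ we are back in the first situation. The whole graph is obtained from this one by adding the vertices of $E(A)$ as dominating vertices. Adding a dominating vertex creates no induced $P_4$ and no induced cycle of length at least $4$, since such subgraphs have no vertex adjacent to all the others. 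So it suffices to prove the following combinatorial claim.

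\emph{Claim.} If $\preceq$ is a preorder in which elements with a common upper bound are comparable, then its comparability graph is a chordal cograph.

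For chordality, suppose $v_1,\dots,v_n$ ($n\ge4$) is an induced cycle. Choose a $\preceq$-maximal vertex $v_i$ among them. Its two cycle-neighbours are comparable to $v_i$ and not strictly above it, so both lie below $v_i$. They therefore have a common upper bound, hence are comparable, which gives a chord. This is a contradiction.

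For the cograph property, suppose $a,b,c,d$ is an induced path. By reversing the path if necessary, we may assume $b\preceq c$. If $a\preceq b$, then $a\preceq c$, which is a forbidden edge; so $b\preceq a$. If $d\preceq c$, then $b$ and $d$ have the common upper bound $c$ and are comparable, which is forbidden. So $c\preceq d$, and then $b\preceq d$, which is again a contradiction.

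I expect the only delicate step to be the reduction in the intersection power graph case. There one must check that the witness $w$ can be taken outside $E(A)$, which is automatic because $w\in\langle w\rangle$. One must also check that the $E(A)$ vertices behave identically in the two graphs. The path and cycle arguments themselves are routine.
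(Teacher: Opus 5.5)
Your proof is correct, and it takes a different route from the paper's. The paper assembles this theorem from two more general results. The first is an unconditional lemma: in any algebra, if $(a,b,c,d)$ is an induced path in the power graph, then one of $\{a,c\}$, $\{b,d\}$ is an edge of the enhanced power graph and the other is an edge of the intersection power graph. So equality with either graph forces a cograph. The second is a Bubboloni--Fumagalli--Praeger-type theorem: any enhanced or intersection power graph that is a cograph is chordal. The paper proves it by reducing to an induced $4$-cycle $(a,b,c,d)$, choosing $u$ with $a,b\in\langle u\rangle$ and $\langle u\rangle$ maximal (dually, $u\in\langle a\rangle\cap\langle b\rangle\setminus E(A)$ with $\langle u\rangle$ minimal), and exhibiting the induced path $(u,b,c,d)$.

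You instead translate the hypothesis once into an order-theoretic condition on $\preceq$: principal down-sets are chains. Dually, in the intersection case, principal up-sets within $A\setminus E(A)$ are chains, and $E(A)$ contributes only dominating vertices. You then read off both properties directly. In particular, your maximal-element argument handles induced cycles of every length at once and does not need the cograph property first.

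Your approach is shorter and self-contained, and it exposes the structure: under the hypothesis, the power graph is the comparability graph of a forest-like preorder, i.e.\ a $P_4$- and $C_4$-free graph. The paper's approach is longer for this statement, but it produces results that hold without the equality hypothesis. Notably, the enhanced or intersection power graph of any algebra is chordal as soon as it is a cograph.

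A minor point: in your $P_4$ argument, the step deriving $b\preceq a$ is never used. Once $b\preceq c$, the two possibilities for the edge $\{c,d\}$ already give the contradiction.
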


The proof of this theorem involves some preliminary results of interest in
their own right.

\begin{theo}
Let $(a,b,c,d)$ be an induced $4$-vertex path in the power graph of an 
algebra $A$. Then one of $\{a,c\}$ and $\{b,d\}$ is an edge in the enhanced
power graph of $A$, and the other is an edge in the intersection power graph.
\end{theo}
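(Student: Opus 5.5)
The plan is to encode each edge of the path by the direction of containment and use transitivity of generation. Write $x\to y$ to mean $x\in\langle y\rangle$, so that $\langle x\rangle\le\langle y\rangle$. The key fact is that $\to$ is transitive: if $x\in\langle y\rangle$ and $y\in\langle z\rangle$, then $\langle x\rangle\le\langle y\rangle\le\langle z\rangle$. In the induced path $(a,b,c,d)$, each edge $\{a,b\}$, $\{b,c\}$, $\{c,d\}$ carries at least one direction. The non-edges $\{a,c\}$, $\{b,d\}$, $\{a,d\}$ of the power graph say that neither endpoint lies in the subalgebra generated by the other. If an edge holds in both directions, I simply fix one direction and use only that containment, so the argument never needs the orientation to be unique.

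First I look at the middle vertex $b$ and its two edges $\{a,b\}$ and $\{b,c\}$. If $a\to b\to c$, transitivity gives $a\in\langle c\rangle$, contradicting the non-edge $\{a,c\}$; the same happens for $c\to b\to a$. So either $b$ is a \emph{top}, with $a,c\in\langle b\rangle$, or $b$ is a \emph{bottom}, with $b\in\langle a\rangle\cap\langle c\rangle$. The same reasoning applies at $c$ with neighbours $b$ and $d$, using the non-edge $\{b,d\}$. Therefore, if $b$ is a top, then $c\to b$ forces $c\to d$ (otherwise $d\to c\to b$ would give $d\in\langle b\rangle$). By the symmetric argument, if $b$ is a bottom, then $c$ is a top. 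Reversing the path swaps the two cases, so it suffices to treat the case where $b$ is a top: $a,c\in\langle b\rangle$ and $c\in\langle b\rangle\cap\langle d\rangle$.

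In that case, $a,c\in\langle b\rangle$ shows directly that $\{a,c\}$ is an edge of the enhanced power graph, using the loose definition (B) adopted in the paper. For $\{b,d\}$ in the intersection power graph, note that $\langle c\rangle\le\langle b\rangle\cap\langle d\rangle$. I then check that $c\notin E(A)$. If $c\in E(A)$, then $c$ lies in every subalgebra, in particular in $\langle a\rangle$, making $\{a,c\}$ a power-graph edge, which is a contradiction. Hence $c\in(\langle b\rangle\cap\langle d\rangle)\setminus E(A)$, so the intersection properly contains $E(A)$ and $b\sim d$ in the intersection power graph. In the symmetric case ($b$ a bottom, $c$ a top), the same argument gives $\{b,d\}$ in the enhanced power graph via $b,d\in\langle c\rangle$. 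It also gives $\{a,c\}$ in the intersection power graph via $b\in\langle a\rangle\cap\langle c\rangle$ with $b\notin E(A)$, the latter because $b\in E(A)$ would give $b\in\langle d\rangle$, contradicting the non-edge $\{b,d\}$.

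The main care point is the orientation step, where the path must alternate top/bottom. The only subtlety there is edges that hold in both directions, and it is handled by fixing one chosen containment per edge, since transitivity only ever uses the chosen containments. The step excluding $E(A)$ is the other place where the definitions matter, and it relies only on $E(A)$ lying in every subalgebra. Note that the vertex $d$, and the non-edge $\{a,d\}$, are not needed beyond the orientation step.
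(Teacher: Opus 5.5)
Your proof is correct and follows essentially the same route as the paper. Both orient each edge by containment of generated subalgebras, use transitivity together with the non-edges $\{a,c\}$ and $\{b,d\}$ to force the alternating pattern, and then read off the enhanced power graph edge from the ``top'' vertex and the intersection power graph edge from the ``bottom'' vertex, which is shown not to lie in $E(A)$. The one difference is cosmetic: the paper rules out two-way arcs (such endpoints would have the same neighbours), whereas you sidestep them by fixing one containment per edge, which works equally well.
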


\begin{proof}
Anticipating the section on directed graphs, we put $x\to y$ if 
$y\in\langle x\rangle$. Now no element of $\{a,b,c,d\}$ is joined to the
other three, so none of these elements lies in $E(A)$. Also, we cannot have
$x\to y$ and $y\to x$ for $x,y\in\{a,b,c,d\}$, since this would imply that
$x$ and $y$ have the same neighbours. Further, we cannot have, for example,
$a\to b\to c$, since this would imply $a\to c$ and so $\{a,c\}$ would be an
edge. Thus we have $a\to b\gets c\to d$ or $a\gets b\to c\gets d$; without loss
of generality we can suppose that the first condition holds. But then
$a,c\in\langle b\rangle$, so $\{a,c\}$ is an edge of the enhanced power graph;
and $c\in\langle b\rangle\cap\langle d\rangle$, with $c\notin E(A)$, so
$\{b,d\}$ is an edge of the intersection power graph.
\end{proof}

\begin{cor}
If the power graph $\Gamma$ of $A$ is equal to either the enhanced power graph
or the intersection power graph, then $\Gamma$ is a cograph.
\end{cor}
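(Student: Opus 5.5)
We argue by contradiction, using the characterisation of cographs recalled in the preliminaries: a graph is a cograph if and only if it has no induced $4$-vertex path. So suppose that the power graph $\Gamma$ of $A$ equals the enhanced power graph (or the intersection power graph) of $A$, but that $\Gamma$ contains an induced path $(a,b,c,d)$.

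The key step is to apply the preceding theorem to this path. It tells us that one of the pairs $\{a,c\}$ and $\{b,d\}$ is an edge of the enhanced power graph, and the other is an edge of the intersection power graph. In either case, each of the two alternative graphs contains at least one of the pairs $\{a,c\}$, $\{b,d\}$ as an edge. This is the only point needing care: the theorem says that one pair lies in each graph, but does not say which, so we simply use that \emph{some} pair lies in each.

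Now use the hypothesis. If $\Gamma$ equals the enhanced power graph, then whichever of $\{a,c\}$, $\{b,d\}$ is an enhanced power graph edge is also an edge of $\Gamma$. If $\Gamma$ equals the intersection power graph, the same holds for the pair that is an intersection power graph edge. In both cases one of $\{a,c\}$, $\{b,d\}$ is an edge of $\Gamma$. This contradicts the assumption that $(a,b,c,d)$ is an induced path, since an induced path has no chords.

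Hence $\Gamma$ has no induced $P_4$, and so $\Gamma$ is a cograph. No real obstacle remains, since all the work was done in the preceding theorem.
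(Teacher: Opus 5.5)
Your proof is correct and is exactly the argument the paper intends; the corollary is stated without a separate proof because it follows at once from the preceding theorem in this way. An induced $4$-vertex path in the power graph would have one of $\{a,c\}$, $\{b,d\}$ as an edge of whichever graph equals $\Gamma$, which contradicts the characterisation of cographs as the graphs with no induced $P_4$.
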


The next, surprising, result was proved for the enhanced power graph of a
group by Bubboloni, Fumagalli and Praeger~\cite{bfp}. In fact, their argument
shows something much more general; it holds in any algebra, and its dual also
holds.

\begin{theo}
If either the enhanced power graph or the intersection power graph of an
algebra is a cograph, then that graph is also a chordal graph.
\end{theo}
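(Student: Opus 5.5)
The plan is to reduce chordality to excluding induced $4$-cycles, and then to show that any induced $4$-cycle in either graph produces an induced $4$-vertex path, which a cograph cannot contain.

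\emph{Reduction.} A cograph contains no induced $4$-vertex path. An induced cycle of length $n\ge5$ contains one: take four consecutive vertices. So it suffices to show that if the enhanced power graph (or the intersection power graph) of $A$ contains an induced $4$-cycle, then it also contains an induced $4$-vertex path. The idea is to adjoin a ``witness'' vertex for one edge of the cycle.

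\emph{Enhanced power graph.} We use the loose definition: $x\sim y$ if and only if $x,y\in\langle w\rangle$ for some $w$. Let $(a,b,c,d)$ be an induced $4$-cycle, so $a\not\sim c$ and $b\not\sim d$.
\begin{itemize}
\item Since $a\sim b$, choose $z$ with $a,b\in\langle z\rangle$. Then $z\sim a$ and $z\sim b$, both witnessed by $\langle z\rangle$.
\item $z$ is not adjacent to $c$. Otherwise $z,c\in\langle w\rangle$ for some $w$, hence $\langle z\rangle\le\langle w\rangle$, so $a,c\in\langle w\rangle$ and $a\sim c$, a contradiction.
\item Symmetrically, $z$ is not adjacent to $d$, since otherwise $b\sim d$.
\item These non-adjacencies show $z\ne c,d$. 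Also $z\ne b$: if $z=b$ then $z\sim a$ becomes the existing edge $a\sim b$, and $z$'s non-adjacency to $d$ would contradict nothing. So we may take $z=b$ only if $a\in\langle b\rangle$, and then we instead use the path $(d,c,b,\cdot)$ below with $z$ replaced appropriately. More simply, note $z\ne b$ because $z\not\sim d$ forces $z\ne a$ and $z$ is adjacent to $a$, which holds for $b$ as well, so this case needs a short separate check.
\end{itemize}
The path $d - c - b - z$ then has edges $dc$, $cb$, $bz$ and non-edges $db$, $dz$, $cz$. So it is an induced $4$-vertex path, and the graph is not a cograph.

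\emph{Intersection power graph.} Here $x\sim y$ if and only if $\langle x\rangle\cap\langle y\rangle\supsetneq E(A)$, and elements of $E(A)$ are adjacent to everything. Let $(a,b,c,d)$ be an induced $4$-cycle.
\begin{itemize}
\item No vertex of the cycle lies in $E(A)$, since each has a non-neighbour in the cycle.
\item Choose $z\in(\langle a\rangle\cap\langle b\rangle)\setminus E(A)$. Then $\langle z\rangle\le\langle a\rangle$, so $\langle z\rangle\cap\langle a\rangle=\langle z\rangle\supsetneq E(A)$, because $z\notin E(A)$. Hence $z\sim a$, and likewise $z\sim b$.
\item $z$ is not adjacent to $c$. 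Otherwise $E(A)\subsetneq\langle z\rangle\cap\langle c\rangle\subseteq\langle a\rangle\cap\langle c\rangle$, giving $a\sim c$.
\item Similarly, $z$ is not adjacent to $d$, since otherwise $b\sim d$.
\end{itemize}
Again $d - c - b - z$ is an induced $4$-vertex path.

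The main point needing care is that the witness $z$ is genuinely a new vertex, or at least that the four vertices of the path are distinct. Adjacency to $a$ and non-adjacency to $c$ and $d$ rule out $z\in\{c,d\}$. Adjacency to $a$ and $b$ is automatic if $z=a$ or $z=b$, so those cases need a separate check. If $z=b$ (that is, $a\in\langle b\rangle$), I would try the symmetric choice of a witness for another edge of the cycle, or run the argument with the roles of $a$ and $b$ exchanged. If every edge's witness coincides with an endpoint, the cycle is an induced $4$-cycle in the power graph itself; then I would orient its edges as in the proof of the preceding theorem and derive a contradiction via transitivity. That is the step I expect to need the most attention.
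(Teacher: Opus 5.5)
Your strategy is the paper's. You reduce to an induced $4$-cycle $(a,b,c,d)$, adjoin a witness $z$ for the edge $ab$, prove $z\not\sim c$ and $z\not\sim d$, and read off the induced path $d,c,b,z$. Your $z$ is the paper's $u$; the paper picks $\langle u\rangle$ maximal or minimal, and your arguments show that choice is unnecessary.

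The one genuine gap is the one you flag yourself: you never show $z\notin\{a,b\}$. Your attempted justification of $z\ne b$ does not go through, and the fallbacks you propose (switching to another edge, or orienting a $4$-cycle of the power graph) are not carried out. Without $z\ne b$ the vertices $d,c,b,z$ need not be distinct, so you have not yet reached a contradiction.

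The missing observation is the one the paper makes first: no two of $\langle a\rangle,\langle b\rangle,\langle c\rangle,\langle d\rangle$ are equal or nested. It is already contained in your own bullets. Your proof that $z\not\sim c$ uses only $a,b\in\langle z\rangle$ (respectively $z\in(\langle a\rangle\cap\langle b\rangle)\setminus E(A)$), so it applies verbatim to $z=b$. Since $b\sim c$ in the cycle, $z=b$ is impossible. You noticed that for $z=b$ the non-adjacency to $d$ contradicts nothing, but you overlooked the non-adjacency to $c$.

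Explicitly:
\begin{itemize}
\item In the enhanced power graph, $a\in\langle b\rangle$ and $b,c\in\langle w\rangle$ give $a,c\in\langle w\rangle$, so $a\sim c$.
\item In the intersection power graph, $b\in\langle a\rangle$ and $\langle b\rangle\cap\langle c\rangle\supsetneq E(A)$ give $\langle a\rangle\cap\langle c\rangle\supsetneq E(A)$, so $a\sim c$.
\end{itemize}
Symmetrically, your argument for $z\not\sim d$, together with $a\sim d$, rules out $z=a$. Finally, $z\in\{c,d\}$ is excluded because $z\sim a$ or $z\sim b$ would then be a chord of the cycle. With this check added, the case analysis at the end of your proposal is unnecessary and the proof is complete.
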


\begin{proof}
Suppose that $\Gamma$ is the enhanced power graph of an algebra $A$ and is a
cograph but not a chordal graph. Then $\Gamma$ contains an induced cycle of 
length at least~$4$. It cannot have length greater than $4$, since such cycles
contain $4$-vertex paths. So it is a $4$-cycle $(a,b,c,d)$. 

Note that no two of $\langle a\rangle$, $\langle b\rangle$, $\langle c\rangle$,
and $\langle d\rangle$ can be equal or included one in another. For if
$a\in\langle c\rangle$ then $a$ and $c$ would be joined; and if
$a\in\langle b\rangle$ and $b,c\in\langle z\rangle$, then
$a,c\in\langle z\rangle$ and again $a$ and $c$ would be joined. All other
cases are similar.

Now $a$ and $b$ are joined, so $a,b\in\langle u\rangle$ for some $u$; choose
$u$ such that $\langle u\rangle$ is maximal with this property. By the preceding
paragraph, $\langle u\rangle$ is not equal to any of the subalgebras generated
by $a$, $b$, $c$ or $d$.

Suppose that $u$ is joined to $c$. Then there is an element $v$ such that $c$
and $u$ (and hence $a$ and $b$) are contained in $\langle v\rangle$. By the
maximality of $\langle u\rangle$, we must have
$\langle v\rangle=\langle u\rangle$, and so $a$ is joined to $c$, which is
false since the cycle is induced.

By an almost identical argument, $u$ is not joined to $d$. But it is joined to
$b$. So $(u,b,c,d)$ is a $4$-vertex path, contradicting the fact that
$\Gamma$ is a cograph.

The argument for the intersection power graph is almost identical but with
the inclusions reversed. We note that none of $a,b,c,d$ lies in $E(A)$,
and hence there is an element
$u\in\langle a\rangle\cap\langle b\rangle\setminus E(A)$; we choose $u$
so that, subject to this, $\langle u\rangle$ is minimal. 
\end{proof}

\paragraph{Question} Are there other results of this type for graphs on
algebras? For example, in a different context, Xuanlong Ma and I~\cite{mc}
showed that the commuting graph of a group is a split graph if and only if it
is a threshold graph; all groups with these properties are known (they are
abelian groups, and generalised dihedral groups of twice odd order).

\medskip

Groups whose power graph and enhanced power graph are equal have been
completely determined, but the argument doesn't use the above structure
theory.  Instead it uses another condition which extends to a wider class
of algebras (though not all).

An algebra $A$ has \emph{property MO} if the rank~$1$ subalgebras of any
rank~$1$ subalgebra of $A$ are totally ordered by inclusion. (The name is a
mnemonic for ``Monogenic subalgebras Ordered''. I have not found this condition
in the literature and don't know if it has another name.)

A finite group $G$ has property MO if and only if $G$ is an \emph{EPPO group},
meaning that every element of $G$ has prime power order. This is because the
cyclic subgroups of $C_n$ are totally ordered if and only if $n$ is a prime
power.

After the classification of soluble EPPO groups by
Higman~\cite{higman} in the 1950s, and of simple EPPO groups by
Suzuki~\cite{suzuki1,suzuki2} in the 1960s, the complete
classification of EPPO groups was given by Brandl~\cite{brandl} in 1981.

\begin{theo}
\begin{itemize}
\item[(a)] The power graph and enhanced power graph of an algebra $A$ are equal
if and only if $A$ has property MO.
\item[(b)] The power graph and enhanced power graph of a group $G$ are equal
if and only if $G$ is an EPPO group.
\end{itemize}
\end{theo}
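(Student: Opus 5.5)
The plan is to prove (a) directly from the definitions, using the loose enhanced power graph (B). Part (b) will then follow from (a) together with the remark made just before the theorem: a finite group has property MO exactly when it is EPPO, since the cyclic subgroups of $C_n$ are totally ordered by inclusion if and only if $n$ is a prime power. By the first theorem of this section the power graph is always a spanning subgraph of the enhanced power graph. So for (a) it suffices to show that every edge of the enhanced power graph is an edge of the power graph if and only if $A$ has property MO.

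For the forward direction, I will assume the two graphs are equal and take a rank~$1$ subalgebra $\langle z\rangle$ together with two of its rank~$1$ subalgebras $\langle x\rangle$ and $\langle y\rangle$. Since $x,y\in\langle z\rangle$, the elements $x$ and $y$ are joined in the loose enhanced power graph, or they are equal. Hence they are joined in the power graph, so $x\in\langle y\rangle$ or $y\in\langle x\rangle$. That gives $\langle x\rangle\le\langle y\rangle$ or the reverse, which is exactly total ordering. One point needs care: a rank~$1$ subalgebra of $\langle z\rangle$ is by definition $\langle x\rangle$ for some $x\in\langle z\rangle$, so we can apply the argument to such generators $x$ and $y$. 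The case $x=y$ is trivial.

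For the converse, I will assume MO and let $x\sim y$ in the enhanced power graph, so that $x,y\in\langle z\rangle$ for some $z$. Then $\langle x\rangle$ and $\langle y\rangle$ are rank~$1$ subalgebras of the rank~$1$ subalgebra $\langle z\rangle$, and by MO they are comparable. If, say, $\langle x\rangle\le\langle y\rangle$, then $x\in\langle y\rangle$, so $x$ and $y$ are joined in the power graph.

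There is no real obstacle here. The only subtle point is that the loose definition (B) is what makes the argument work, because it refers directly to containment in some $\langle z\rangle$. Under the strict definition (A) one would need $1$-monotonicity to pass from ``$\langle x,y\rangle$ has rank~$1$'' back to containment in a monogenic subalgebra. For (b) I will also note that groups are $1$-monotonic, so the choice of definition does not matter there. The rest of (b) is the $C_n$ observation: if $n$ has two distinct prime divisors $p$ and $q$, then the subgroups of orders $p$ and $q$ are incomparable, and conversely the subgroups of a cyclic $p$-group form a chain.
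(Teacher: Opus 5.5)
Your proof is correct and is essentially the paper's argument: your forward direction is the contrapositive of the paper's (elements $x,y\in\langle z\rangle$ with incomparable $\langle x\rangle,\langle y\rangle$ are joined in the loose enhanced power graph but not in the power graph), your converse is the same as the paper's, and (b) follows from the observation about cyclic subgroups of $C_n$ exactly as in the paper. One small slip, outside the proof itself, is that your closing remark has the direction reversed. ``$\langle x,y\rangle$ has rank~$1$'' trivially gives $x,y\in\langle w\rangle$ for some $w$; it is the passage from $x,y\in\langle z\rangle$ to $\langle x,y\rangle$ having rank~$1$ that needs $1$-monotonicity, which is why ``graphs equal $\Rightarrow$ MO'' can fail under the strict definition.
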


\begin{proof}
(a) Suppose that $A$ is not MO. Then some rank~$1$ subalgebra
$\langle z\rangle$ contains a subalgebra of rank greater than $1$, which we
may suppose is $\langle x,y\rangle$; then $x$ and $y$ are joined in the
enhanced power graph but not in the power graph. Conversely, if $A$ is MO,
and $x,y\in\langle z\rangle$, then either $\langle x\rangle\le\langle y\rangle$
or the reverse, so $x$ and $y$ are joined in the power graph. (Note that one
direction requires the loose definition of enhanced power graph.)

(b) follows immediately.
\end{proof}

\medskip

The characterisation of algebras with power graph and intersection power graph
equal is not known even for groups. We know that this implies that the power
graph is a cograph; partial characterisations of
such groups are known, see~\cite{cmm}. Also, a group in which every
non-identity element has prime order has power graph equal to intersection
power graph. See~\cite{bc} for discussion.

\medskip

Here is another result known for groups~\cite{aaa,fmw} which extends to
arbitrary algebras.

\begin{theo}
The power graph of an algebra is perfect.
\end{theo}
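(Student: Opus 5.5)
Let $A$ be a finite algebra, and write $x\to y$ if $y\in\langle x\rangle$, as in the proof of the theorem on induced $4$-vertex paths. This relation is reflexive and transitive, since $y\in\langle x\rangle$ implies $\langle y\rangle\le\langle x\rangle$. So $\to$ is a preorder on $A$, and the power graph is its comparability graph: $x\sim y$ if and only if $x\to y$ or $y\to x$. The plan is to reduce to the classical fact that comparability graphs of partial orders are perfect. By Dilworth's theorem (or more simply Mirsky's theorem), in a finite poset the largest chain size equals the minimum number of antichains covering it. Since induced subgraphs of comparability graphs are comparability graphs of the induced suborders, this gives perfectness directly.

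First I will pass from the preorder to a partial order. The equivalence classes of $\to\cap\gets$ consist of elements generating the same monogenic subalgebra. Within a class, all elements are pairwise joined. Between two distinct classes, either all pairs are joined or none are. So the power graph is obtained from the comparability graph $\Delta$ of the quotient poset (monogenic subalgebras under inclusion) by substituting a complete graph for each vertex. Alternatively, and avoiding the substitution lemma, I will refine the preorder to a genuine partial order directly. Choose any linear order within each class and define $x<y$ if either $x\to y$ but not $y\to x$, or $x,y$ are in the same class and $x$ precedes $y$. This is a partial order whose comparability graph is exactly the power graph. The only points to check are antisymmetry and transitivity, and those are routine case checks.

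Then for any induced subgraph on $S\subseteq A$, the restriction of this partial order to $S$ has the induced subgraph as its comparability graph. Cliques are chains, and independent sets are antichains. Mirsky's theorem colours $S$ by the length of the longest chain ending at each element, which uses exactly $\omega$ colours. Hence $\chi=\omega$ for every induced subgraph, and the power graph is perfect.

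I expect no real obstacle. The step needing care is verifying that the refined relation is a partial order whose comparability graph is precisely the power graph. One also has to note that elements of $E(A)$, or elements with equal monogenic subalgebras, cause no trouble: both fit within the preorder framework. As a fallback check, one could instead invoke the Strong Perfect Graph Theorem and show directly that comparability graphs have no odd holes or antiholes. That route is unnecessary, however, since Mirsky's argument applies directly.
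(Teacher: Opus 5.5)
Your proof is correct and follows essentially the same route as the paper: view the power graph as the comparability graph of the preorder given by inclusion of monogenic subalgebras, refine it to a partial order by linearly ordering each class of elements generating the same $\langle x\rangle$ (which leaves the comparability graph unchanged), and apply Mirsky's theorem. One small slip: the statement that the largest chain size equals the minimum number of antichains covering the poset is Mirsky's theorem, not Dilworth's; Dilworth's theorem is the dual statement, and your colouring by longest chain correctly uses Mirsky's.
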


\begin{proof}
We define a relation $\preceq$ on $A$ by the rule that $x\preceq y$ if
$\langle x\rangle\subseteq\langle y\rangle$. This is a partial preorder (a
reflexive and transitive relation), and the power graph is its comparability
graph (that is, distinct $x$ and $y$ are joined if $x\preceq y$ or
$y\preceq x$).

We can convert this partial preorder into a partial order as follows. Define
$x\equiv y$ if $x\preceq y$ and $y\preceq x$ (that is, if 
$\langle x\rangle=\langle y\rangle$). Now $\equiv$ is clearly an equivalence
relation; we simply modify $\preceq$ by totally ordering each equivalence
class. This doesn't change the comparability graph.

Now a theorem of Mirsky~\cite{mirsky} asserts that the comparability graph of
a partial order is perfect.
\end{proof}

\medskip
 
The enhanced power graph of a group is not in general perfect: indeed, enhanced
power graphs are \emph{universal}: every finite graph is an induced subgraph of
the enhanced power graph of some group~\cite[Theorem 5.5]{gog}. However, they
are close to power graphs in various ways. Here are a few.

\begin{itemize}
\item The enhanced power graph of a finite group is \emph{weakly perfect}:
this means that its clique number and chromatic number are equal~\cite{cp}.
\item The power graph and enhanced power graph of a group have the same
\emph{matching number} (this is the maximum size of a set of pairwise disjoint
edges)~\cite{css}.
\item There has been a study of the \emph{difference graph}, the graph whose
edge set is the difference of the edge sets of the enhanced power graph and
the power graph~\cite{bcdd}. This includes the construction of an interesting
sparse bipartite graph on 385 vertices with diameter and girth~$10$.
\end{itemize}
The arguments are very combinatorial, and it might be possible to extend them
to other types of algebras.

Another point where such extension might be possible involves cyclic groups.
For example, the clique numbers of the power graph and enhanced power graph
of a group are determined by the cyclic subgroups of the group: they are the
maximum values of these parameters over all cyclic subgroups. The enhanced
power graph of a cyclic group is complete, so its clique number is the group
order. But the clique number of the power graph of a cyclic group of order $n$
is an interesting number-theoretic function $f(n)$, closely related to Euler's
function $\phi(n)$. Indeed, we have
\[\phi(n)\le f(n)\le c\phi(n),\]
for some constant $c$; the best possible constant (the limit superior of the
ratio $f(n)/\phi(n)$) is roughly $2.6481017597\ldots$.

\subsection{Generating graph}

Generating graphs of groups have some important properties, and their study
may have introduced a new graph parameter: the \emph{spread} of a graph is the
maximum integer $s$ such that any $s$ vertices of the graph have a common
neighbour. Thus, a graph has spread~$1$ if and only if it has no isolated
vertices, and spread~$2$ if and only if any two vertices have a common
neighbour (so in particular, the graph is connected with diameter at most~$2$).

It was shown in \cite{bgk} that the spread of the generating graph of a
non-abelian finite simple group (with the identity deleted)
is at least~$1$. This was improved in \cite{bgh} to show that the spread is at
least~$2$, and indeed this paper gave a full characterisation of the finite
groups with spread at least~$2$.

Is any result possible about the arbitrary $2$-generated algebras with
spread at least~$2$?

\section{Zero-divisor graphs of posets}

The zero-divisor graph of a poset was introduced by Lu and Wu~\cite{lw} as
a tool for studying semigroups. In the hands of Vinayak Joshi and others, it
has potential to apply to a wider class of algebras. We give a very brief
account here.

Let $P$ be a poset with unique minimal element $0$. We denote by $S^\wedge$
the set $\{x\in P:(\forall s\in S)(x\le s)\}$. The element $a\in P$ is
a \emph{zero-divisor} if $a\ne 0$ and there exists $b\in P$ with $b\ne 0$
such that $\{a,b\}^\wedge=0$. The \emph{zero-divisor graph} of $P$ is the
graph whose vertices are the zero-divisors, with an edge $\{a,b\}$ if and
only if $\{a,b\}^\wedge=0$.

Given an algebra $A$, we define $P(A)$ to be the poset whose element set is $A$,
with $a\le b$ whenever $\langle a\rangle\supseteq\langle b\rangle$. (Note that
this is the dual of the poset induced by inclusion of $1$-generated
subalgebras.)
We add a new element $0$ defined to lie below all the elements of $A$.
Now we claim that the zero-divisor graph of this poset is the complement of
the enhanced power graph of $A$. For, if $\{x,y\}^\wedge=0$, then there is
no element $z\in P$ such that $x,y\in\langle z\rangle$, so $x$ and $y$ are
nonadjacent in the enhanced power graph; and conversely.

Devhare, Joshi and Lagrange~\cite{djl} give a sufficient condition on a poset
for the complement of its zero-divisor graph to be weakly perfect. While this
is not strong enough to deduce the result of \cite{cp} that the enhanced power
graph of a group is weakly perfect, it may be interesting to discover
algebras to which it does apply.

\section{Directed graphs}

\subsection{Power digraph}

The power graph of an algebra is more naturally regarded as a directed graph,
with an arc $x\to y$ if $y\in\langle x\rangle$. From this \emph{power digraph},
we obtain the power graph by ignoring directions, and the enhanced power graph
by the rule that $x\sim y$ if and only if there exists $z$ such that
$z\to x$ and $z\to y$.

In fact, for groups, these three objects contain the same information:

\begin{theo}
Let $\Gamma$ denote any one of the power digraph, power graph and
enhanced power graph. 
If $G_1$ and $G_2$ are groups for which $\Gamma(G_1)$ and $\Gamma(G_2)$
are isomorphic, then $\Delta(G_1)$ and $\Delta(G_2)$ are isomorphic, where
$\Delta$ denotes any  other of these three graph types.
\label{t:recon}
\end{theo}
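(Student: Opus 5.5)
The plan is to make the power digraph the hub. The power digraph determines the other two graphs, by forgetting directions and by the rule $x\sim y$ iff some $z$ has $z\to x$ and $z\to y$. So it suffices to show two things: the power graph determines the power digraph up to isomorphism, and the enhanced power graph determines the power digraph up to isomorphism. In both cases I would reconstruct an isomorphism-invariant combinatorial object: the poset of cyclic subgroups of $G$ under inclusion, with each cyclic subgroup $C$ labelled by its order $|C|$. The power digraph can be rebuilt from this labelled poset. Partition $G$ into the sets $\mathrm{Gen}(C)$ of generators of each cyclic subgroup $C$, of sizes $\phi(|C|)$. Then $x\to y$ iff $\langle y\rangle\le\langle x\rangle$. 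Any bijection preserving the labelled poset therefore lifts to a digraph isomorphism.

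\emph{From the power graph.} Consider the relation ``same closed neighbourhood'' on vertices. Each class of this relation is a union of sets $\mathrm{Gen}(C)$.
\begin{enumerate}
\item First identify the vertices adjacent to everything. This is the identity, together with the whole group when $G$ is a cyclic $p$-group, and together with the generators when $G$ is cyclic. A generalised quaternion group adds its central involution. These cases can be recognised from the number of such vertices and handled directly.
\item Otherwise, I would show that a class is either a single $\mathrm{Gen}(C)$, or the set of elements of orders $p^{r},\dots,p^{s}$ inside a cyclic $p$-subgroup $C$ of order $p^s$ that is a maximal cyclic subgroup. The second (``chain'') case occurs only when every element outside the class adjacent to it lies in $\langle x^{p^{s-r+1}}\rangle$ for $x$ of order $p^s$.
\item A single generator class $\mathrm{Gen}(C)$ should be read off from the fact that its size is $\phi(|C|)$. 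A chain class has size $p^s-p^{r-1}$, and from the size one can recover $p$, $r$ and $s$.
\item Inside each class orient the arcs in the unique possible way up to isomorphism: a chain is a totally ordered sequence of generator sets. Between classes, closed-neighbourhood inclusion gives comparability, and the direction is forced by the sizes, since in $C_{p^s}$ the larger subgroup has more generators, apart from the small exceptions with $\phi=1,2$, which I would treat by hand.
\end{enumerate}

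\emph{From the enhanced power graph.} Here maximal cliques are exactly the maximal cyclic subgroups, since a clique generates an abelian subgroup all of whose pairs lie in cyclic subgroups, and hence a cyclic one. For a vertex $x$, let $M(x)$ be the set of maximal cliques containing $x$. Group the vertices by $M(x)$; the intersections of maximal cliques then give the subgroups $\bigcap_{K\in M(x)}K$, which are cyclic. Inside each such intersection, which is a cyclic group of known order $n$, the induced structure is complete. The power digraph of $C_n$, however, is determined by $n$ alone. It remains to place the cyclic subgroups of these intersections consistently. I would do this by counting: for each vertex, count the number of vertices $y$ with $M(y)\supseteq M(x)$. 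This identifies the smallest intersection containing $x$, and the position of $\langle x\rangle$ within the cyclic groups is then pinned down by the orders of the intersections it lies in.

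The main obstacle is the power-graph direction, specifically the identification and orientation of the chain classes inside cyclic $p$-groups, where closed neighbourhoods fail to separate different cyclic subgroups. I would first try to settle this by the counting argument on class sizes $p^s-p^{r-1}$ versus $\phi(m)$. The aim is to show that the size together with adjacency to outside classes determines $(p,r,s)$ uniquely. Any residual ambiguity would be confined to small orders, which I would check directly.
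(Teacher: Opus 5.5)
\textbf{The gap: the power-graph half, steps 3 and 4.} You describe the closed-neighbourhood classes correctly: each is either $\mathrm{Gen}(C)$, or the top part of a maximal cyclic $p$-subgroup. But you then try to recover three things from class sizes: the type of a class, the order of its elements, and the direction of arcs between classes. Sizes cannot do this.
\begin{itemize}
\item The function $\phi$ is far from injective: $\phi(7)=\phi(9)=\phi(14)=\phi(18)=6$.
\item A chain class can have the same size as a generator class. In the semidihedral group of order $16$, the elements of orders $4$ and $8$ in the cyclic subgroup of order $8$ form one class, of size $8-2=6$. In $D_{14}$, the elements of order $7$ also form a class of size $6$.
\item Chain sizes do not determine $(p,r,s)$, since $32-8=27-3=24$. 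The value $|N[K]|=p^s$ would rescue this, but only once you already know that $K$ is a chain.
\end{itemize}
Most seriously, your orientation rule in step 4 fails far beyond the cases $\phi\in\{1,2\}$. For every odd $m$ we have $\phi(m)=\phi(2m)$. So whenever $y\in\langle x\rangle$ with $o(y)=m$ odd and $o(x)=2m$, the two generator classes are adjacent and of equal size, and sizes give no direction. Neighbourhood inclusion does not decide it either, because $N[x]$ and $N[y]$ are in general incomparable. The involution of $\langle x\rangle$ is adjacent to $x$ but not to $y$. An element of order $3m$ having $y$ as a power is adjacent to $y$ but not to $x$.

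What is missing is a structural criterion that uses more of the graph than class sizes. It must recognise the chain classes and decide which of two adjacent classes consists of powers of the other. Supplying this is the real content of the proofs the paper relies on, \cite{power,bp}; a counting argument on sizes cannot replace it, and the residual ambiguity is not confined to small orders.

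\textbf{The enhanced-power-graph half.} The paper gives no argument of its own here; it cites \cite{power,bp} for the power graph and \cite{zbm} for the enhanced power graph. Your enhanced-power-graph half is sound in outline, with two repairs.
\begin{itemize}
\item \emph{Cliques lie in cyclic subgroups.} You only know that pairs of clique elements lie in cyclic subgroups, not all pairs of the subgroup they generate. Argue via $p$-parts instead: for each prime $p$, the $p$-parts of the clique elements generate pairwise comparable cyclic $p$-subgroups. Hence every Sylow subgroup of the abelian group generated by the clique is cyclic, so that group is cyclic.
\item \emph{The final counting step.} This is better stated directly. Each cyclic subgroup $D$, with $d=|D|$, is the subgroup of order $d$ of the smallest intersection $I$ of maximal cliques containing $D$. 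Here $d$ does not divide $|J|$ for any such intersection $J\subsetneq I$. Moreover, $(I,d)\le(I',d')$ holds if and only if $I\subseteq I'$ and $d\mid d'$.
\end{itemize}
Hence the set system of maximal cliques determines the order-labelled poset of cyclic subgroups, and with it the power digraph.
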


The fact that the power graph determines the power digraph up to
isomorphism was proved in \cite{power,bp}, and the analogous fact for the
enhanced power graph in \cite{zbm}. Note the words ``up to isomorphism'':
the power graph of the cyclic group of order $6$ is the complete graph with
edges $\{2,3\}$ and $\{3,4\}$ deleted, so $0$, $1$ and $5$ are joined to all
other vertices; we can choose any one of them to be the identity, that is, a
sink in the power digraph, and the other two will be sources.

We note that the closed out-neighbourhood of a vertex $x$ in the power digraph
of any algebra is the subalgebra generated by $x$. So, for example, in a group,
the power digraph determines the orders of the elements. As we noted earlier,
the power digraph, with a loop at each vertex, is a partial preorder.

It is not known whether the analogue of Theorem~\ref{t:recon} holds in any
larger class of algebras.

\subsection{Endomorphism digraph}

Another directed graph associated with a group was studied recently in
\cite{endo}; its definition immediately extends to arbitrary algebras.
The \emph{endomorphism digraph} of an algebra $A$ has vertex set $A$,
with an arc $x\to y$ if there is an endomorphism of $A$ mapping $x$ to $y$.
The \emph{endomorphism graph} is obtained from the endomorphism digraph by
ignoring directions (and collapsing double edges to single edges).

It is easy to see that the endomorphism digraph, with a loop at each vertex,
is reflexive and transitive, that is, a partial preorder; so, as we saw for
the power graph, this implies that the endomorphism graph is perfect.

Many of the results in \cite{endo} are negative: the endomorphism graph does
not determine the endomorphism digraph, and the latter does not determine the
group (up to isomorphism); we can have $x\to y$ and $y\to x$ but with no
automorphism carrying $x$ to $y$. Among positive results for groups, which
may extend under some hypotheses to more general algebras, are the facts that,
if two groups $G$ and $H$ have coprime orders, then
\begin{itemize}
\item the endomorphism digraph of $G\times H$ is the Cartesian product of the
endomorphism digraphs of $G$ and $H$;
\item the endomorphism graph of $G\times H$ is the strong product of the
endomorphism graphs of $G$ and $H$.
\end{itemize}
The coprimality assumption implies that any endomorphism of $G\times H$ must
map $G$ and $H$ to themselves. Perhaps the generalisation will involve a
condition of this form.

\subsection{Equality}

In this section I investigate when the power and endomorphism digraphs are
equal. First, a general result about these digraphs. 

\begin{lemma}
Let $x$ and $y$ be elements of the finite algebra $A$ which are joined in both
the power graph and the endomorphism graph. Then either they are joined by 
arcs in both directions in the power and endomorphism digraphs, or they are
joined by single arcs in the same direction.
\end{lemma}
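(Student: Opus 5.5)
The plan rests on two elementary observations, followed by a case analysis on which arcs are present.

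\emph{First observation.} If $\phi$ is an endomorphism of $A$, then $\phi(\langle x\rangle)=\langle\phi(x)\rangle$. The image of a subalgebra under a homomorphism is a subalgebra, so $\phi(\langle x\rangle)$ is a subalgebra containing $\phi(x)$, giving $\supseteq$. Conversely, $\phi^{-1}(\langle\phi(x)\rangle)$ is a subalgebra containing $x$, so it contains $\langle x\rangle$, giving $\subseteq$. Hence an arc $x\to y$ in the endomorphism digraph gives $|\langle y\rangle|\le|\langle x\rangle|$. An arc $x\to y$ in the power digraph means $\langle y\rangle\subseteq\langle x\rangle$.

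\emph{Second observation.} Suppose $\langle x\rangle=\langle y\rangle$ and some endomorphism $\phi$ has $\phi(x)=y$ or $\phi(y)=x$. Then there are endomorphism arcs in both directions. For instance, if $\phi(y)=x$, then $\phi$ maps the finite set $\langle y\rangle=\langle x\rangle$ onto $\langle x\rangle$. So $\phi$ restricts to a permutation of $\langle x\rangle$, and some $k\ge1$ has $\phi^k$ equal to the identity on $\langle x\rangle$. Then $\phi^{k-1}(x)=\phi^k(y)=y$. Since $\phi^{k-1}$ is an endomorphism (the identity map if $k=1$), this gives the arc $x\to y$. The case $\phi(x)=y$ is symmetric.

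Now suppose $x$ and $y$ are joined in both graphs; we may assume $x\neq y$. We must rule out every configuration other than ``all four arcs'' and ``one arc in each digraph, in the same direction''. By symmetry between $x$ and $y$, there are three cases.
\begin{itemize}
\item[(i)] Opposite single arcs: power arc $x\to y$ and endomorphism arc $y\to x$. Then $\langle y\rangle\subseteq\langle x\rangle$ and $|\langle x\rangle|\le|\langle y\rangle|$. By finiteness $\langle x\rangle=\langle y\rangle$, so the power arcs go both ways. The second observation then gives endomorphism arcs both ways, so all four arcs are present.
\item[(ii)] Power arcs both ways and endomorphism arc only $x\to y$. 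Then $\langle x\rangle=\langle y\rangle$, and the second observation gives the missing endomorphism arc, a contradiction.
\item[(iii)] Endomorphism arcs both ways and power arc only $x\to y$. Then $|\langle x\rangle|=|\langle y\rangle|$ together with $\langle y\rangle\subseteq\langle x\rangle$ forces equality, hence $x\in\langle y\rangle$, a contradiction.
\end{itemize}
This exhausts the possibilities.

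The only step needing care is the second observation. There, finiteness of $A$ is used to turn a surjective self-map of $\langle x\rangle$ into a permutation, and then to extract an iterate $\phi^{k-1}$ that sends $x$ back to $y$. The rest is bookkeeping with cardinalities of monogenic subalgebras.
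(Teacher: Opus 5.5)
Your proof is correct and follows the paper's argument: compare $|\langle x\rangle|$ and $|\langle y\rangle|$ to force $\langle x\rangle=\langle y\rangle$, then use that the endomorphism permutes this finite subalgebra, so an iterate of it sends $x$ back to $y$. The paper treats only the configuration ``power arc one way, endomorphism arc the other way''. That single case already subsumes your cases (ii) and (iii), since each of those contains such an opposite pair.
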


\begin{proof}
Suppose that there is an arc $x\to y$ in the power digraph and an arc $y\to x$
in the endomorphism graph. The first implies that $y\in\langle x\rangle$, so
$\langle y\rangle\subset\langle x\rangle$, whence
$|\langle x\rangle|\ge|\langle y\rangle|$. The second implies the existence of
an endomorphism $f$ mapping $y$ to $x$, and hence mapping $\langle y\rangle$
onto $\langle x\rangle$; so $|\langle x\rangle|\le|\langle y\rangle|$. Hence we
have equality: $\langle x\rangle=\langle y\rangle$.

This means, first, that $x\in\langle y\rangle$, so $y\to x$ in the power
digraph; second, the endomorphism $f$ maps $\langle x\rangle$ to itself, and
so induces an automorphism of this subalgebra carrying $y$ to $x$; then some
positive power of $f$ maps $x$ to $y$, so $x\to y$ in the endomorphism digraph.
(Note that the finiteness of $A$ is used here.)
\end{proof}

From this we obtain our first main result:

\begin{theo}
Let $A$ be a finite algebra. Then the power graph and endomorphism graph of
$A$ coincide if and only if the power digraph and endomorphism digraph
coincide.
\end{theo}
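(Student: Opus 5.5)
One direction is immediate: the power graph and endomorphism graph are obtained from the respective digraphs by the same operation, namely ignoring directions and collapsing double arcs. So if the two digraphs coincide, so do the two graphs. The work is in the converse, and for this I will rely on the preceding lemma.

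Suppose the power graph and endomorphism graph of $A$ are equal. I will show that every arc $x\to y$ (with $x\ne y$) in one digraph is an arc in the other. Take an arc $x\to y$ in the power digraph. Then $\{x,y\}$ is an edge of the power graph, and hence, by hypothesis, an edge of the endomorphism graph. So $x$ and $y$ are joined in both graphs, and the lemma applies: either both digraphs have arcs in both directions between $x$ and $y$, or each has a single arc and the two arcs point the same way. In the first case the endomorphism digraph contains $x\to y$ directly. In the second case, the power digraph's single arc is $x\to y$, so the endomorphism digraph's single arc must also be $x\to y$. Exchanging the roles of the two digraphs gives the reverse inclusion by the identical argument. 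Hence the arc sets agree.

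Loops need a brief remark. If we include a loop at each vertex, both digraphs have all loops, since $x\in\langle x\rangle$ and the identity map is an endomorphism. So loops cause no discrepancy.

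There is no real obstacle here beyond the lemma itself, which is where finiteness is used. The only point needing care is that the lemma's dichotomy (``both directions in both'' or ``single arcs in the same direction'') genuinely rules out mixed configurations, such as a double arc in one digraph and a single arc in the other. The lemma's statement covers this, since its proof shows that a reversed arc forces $\langle x\rangle=\langle y\rangle$ and hence double arcs in both digraphs.
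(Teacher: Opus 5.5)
Your proposal is correct and follows the paper's proof: the ``if'' direction holds because both graphs are obtained from their digraphs by forgetting directions, and the ``only if'' direction follows directly from the preceding lemma. You spell out the case analysis that the paper leaves implicit, and you rightly note that the lemma's proof rules out mixed single/double-arc configurations.
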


\begin{proof}
The ``if'' statement is clear from the definition, while the ``only if''
statement comes immediately from the lemma.
\end{proof}

We can translate this into a condition on the algebra. Recall that a
subalgebra $B$ of $A$ is \emph{fully invariant} if it is invariant under all
endomorphisms of $A$.

\begin{theo} Let $A$ be a finite algebra.
\begin{itemize}
\item The endomorphism digraph of $A$ is a spanning subgraph of the power
digraph if and only if every subalgebra of $A$ is fully invariant.
\item The power digraph of $A$ is a spanning subgraph of the endomorphism
digraph if and only if, for every $x,y\in A$ with $y\in\langle x\rangle$, there
is an endomorphism of $A$ mapping $x$ to $y$.
\end{itemize}
\end{theo}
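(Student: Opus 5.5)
Both parts will be proved by unwinding the definitions. The power digraph has an arc $x\to y$ exactly when $y\in\langle x\rangle$. The endomorphism digraph has an arc $x\to y$ exactly when $f(x)=y$ for some $f\in\mathrm{End}(A)$. For each part, ``spanning subgraph'' means every arc of one digraph is an arc of the other, and I will check each direction directly.

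\emph{First part.} Suppose every subalgebra is fully invariant, and let $f$ be an endomorphism with $f(x)=y$. The subalgebra $\langle x\rangle$ is fully invariant, so $y=f(x)\in\langle x\rangle$; hence $x\to y$ is an arc of the power digraph. Conversely, suppose every endomorphism arc is a power arc. Let $B\le A$, let $f$ be an endomorphism, and let $b\in B$. Then $b\to f(b)$ is an endomorphism arc, so $f(b)\in\langle b\rangle\subseteq B$. Thus $f(B)\subseteq B$, and $B$ is fully invariant. The only point needing a remark is that full invariance of all subalgebras is equivalent to full invariance of the monogenic subalgebras $\langle x\rangle$. The second half of the argument handles this automatically, since it works elementwise.

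\emph{Second part.} This is a literal restatement of the definitions. The power digraph is a spanning subgraph of the endomorphism digraph exactly when every arc $x\to y$ with $y\in\langle x\rangle$ is also an endomorphism arc. That means there is an endomorphism mapping $x$ to $y$.

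There is no real obstacle. The one step requiring care is in the converse of the first part: there one should argue elementwise through $\langle b\rangle$, rather than assume $B$ is monogenic.
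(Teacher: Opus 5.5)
Your proposal is correct and matches the paper's proof. The paper also argues elementwise that $f(b)\in\langle b\rangle\le B$ for every $b\in B$, gets the converse by applying full invariance to $\langle x\rangle$ (its ``reversing the argument''), and treats the second part as immediate from the definitions.
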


\begin{proof}
For the first part, suppose first that endomorphism digraph is a spanning
subgraph of the power subgraph, and let $B$ be a subalgebra of $A$ and
choose $b\in B$. Then $b\to y$ is an arc of the power digraph if and only if
$y\in\langle b\rangle\le B$; so all endomorphisms map $y$ into $B$. Since
this holds for all $b\in B$, we see that $B$ is fully invariant. The converse
is proved by reversing the argument.

The second part is immediate from the definitions.
\end{proof}

Now we can decide which groups have the two graphs equal.

\begin{theo}
Let $G$ be a finite group. Then the power digraph and endomorphism digraph of
$G$ coincide if and only if $G$ is a cyclic group.
\end{theo}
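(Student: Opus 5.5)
We use the preceding theorem, which splits equality of the two digraphs into two conditions. The first is that every subgroup of $G$ is fully invariant. The second is that whenever $y\in\langle x\rangle$ there is an endomorphism of $G$ mapping $x$ to $y$.

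For the ``if'' direction, let $G=C_n$ be cyclic. Every subgroup of a cyclic group is characteristic, and in fact fully invariant. An endomorphism $x\mapsto x^k$ maps each element into the cyclic subgroup it generates, and every endomorphism of $C_n$ has this form. So the first condition holds. For the second, if $y\in\langle x\rangle$ then $y=x^k$ for some $k$, and the power map $g\mapsto g^k$ is an endomorphism of the abelian group $G$ sending $x$ to $y$. Both conditions hold, and the digraphs coincide.

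For the ``only if'' direction, suppose the digraphs coincide. The key step is to apply the second condition with $x$ a generator of a cyclic subgroup and $y=1$, or more usefully with $x=g$ arbitrary. I would instead use the first condition directly in a more clever way. The plan is to take a generating set of minimum size and show the rank is $1$.

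Concretely, suppose $G$ is not cyclic and pick $x\in G$ of maximal order, so that $\langle x\rangle\ne G$. Choose $z\notin\langle x\rangle$. The arc $x\to x$ exists trivially. We want an endomorphism $f$ with $f(x)=z$, since then $x\to z$ in the endomorphism digraph would force $z\in\langle x\rangle$, a contradiction. Constructing such an $f$ in general is hard.

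A better route is through orders. From $x\to y$ in the endomorphism digraph, the lemma's argument shows that $|\langle y\rangle|$ divides $|\langle x\rangle|$, and equality of the digraphs forces $y\in\langle x\rangle$. So I would look for endomorphisms with large image. The identity gives nothing, but every endomorphism $f$ must satisfy $f(g)\in\langle g\rangle$ for all $g$. Hence each subgroup is fully invariant, and in particular normal, so $G$ is Dedekind.

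Next consider the second condition applied to $x=g$ and $y=g^{k}$. The map sending $g$ to $g^k$ must extend to an endomorphism. Take $y=1$ and $x$ a generator of a maximal cyclic subgroup; the condition is trivial there, so this choice does not help. Instead take $x$ of prime order $p$ and $y=x$: also trivial.

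The real leverage comes from the first condition combined with the existence of projection-like endomorphisms. Write $G$ as a Dedekind group, that is, abelian or $Q_8\times C_2^a\times B$ with $B$ abelian of odd order. For abelian $G$ that is not cyclic, some Sylow $p$-subgroup is non-cyclic, so $G$ has a direct factor $C_{p^a}\times C_{p^b}$ with $a\ge b$. There is an endomorphism that projects onto the second factor and then embeds it into the first. This sends an element $(0,1)$ outside $\langle(1,0)\rangle$-type subgroups to an element of the first factor, so it does not preserve $\langle(0,1)\rangle$, contradicting full invariance. For $Q_8$, the map $Q_8\to Q_8/\langle i\rangle\cong C_2\to\langle -1\rangle$ sends $j$ to $-1\in\langle j\rangle$, which is fine. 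Instead use $Q_8\to Q_8/\{\pm1\}\cong C_2^2$, project to a $C_2$ factor, and map that into $\langle -1\rangle$; this stays inside the subgroups. So for $Q_8$ itself I would check the second condition: $-1\in\langle i\rangle$ requires an endomorphism with $i\mapsto -1$. Such an endomorphism has image of exponent dividing $2$ and therefore factors through $C_2^2$, whose image in $Q_8$ lies in $\langle -1\rangle$. Sending $i\mapsto -1$ and $j\mapsto 1$ then forces $k=ij\mapsto -1$, and this is consistent with the relations. So I need to double-check this case; alternatively, $Q_8\times C_2$ and the odd-order parts are excluded by the abelian argument.

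The main obstacle is this Hamiltonian case, which I expect needs careful verification of both conditions. The abelian non-cyclic case via the cross-projection endomorphism is the core of the argument.
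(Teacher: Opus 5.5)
Your ``if'' direction is fine. Your abelian non-cyclic case is also fine. The endomorphism $(u,v,w)\mapsto(\iota(v),0,0)$ of $G=C_{p^a}\times C_{p^b}\times K$, with $\iota\colon C_{p^b}\to C_{p^a}$ injective, moves $(0,1,0)$ outside $\langle(0,1,0)\rangle$. This is a harmless variant of the paper's argument, which applies a projection to $\langle ab\rangle$.

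\textbf{The gap: the Hamiltonian case.} The proof breaks down for $G=Q_8\times C_2^a\times B$ with $B$ abelian of odd order. You leave this case open, and your exploration there points the wrong way.

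First, the endomorphisms you test, those factoring through $Q_8/\langle i\rangle$ or $Q_8/\{\pm1\}$, land in $\langle -1\rangle$. Since $-1$ lies in every nontrivial subgroup of $Q_8$, such maps can never witness a failure of full invariance.

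Second, the second condition genuinely holds in $Q_8$, so it cannot give a contradiction either. For example, $i\mapsto -i$ is conjugation by $j$, and $i\mapsto -1$ factors through $Q_8/\langle j\rangle$.

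Third, your fallback remark that ``$Q_8\times C_2$ and the odd-order parts are excluded by the abelian argument'' does not cover $G=Q_8$ or $G=Q_8\times C_m$ with $m$ odd. Even where it applies, it is not literally the abelian argument, since those groups are nonabelian.

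\textbf{The missing idea.} Use an automorphism rather than an endomorphism with small image. $Q_8$ has the automorphism $i\mapsto j\mapsto k\mapsto i$; its outer automorphisms permute the three cyclic subgroups of order $4$. So $\langle i\rangle$ is normal but not characteristic, and hence not fully invariant. Extend this automorphism by the identity on the complementary factor $C_2^a\times B$. This gives an automorphism of $G$ sending $(i,1)$ to $(j,1)\notin\langle (i,1)\rangle$. Thus the endomorphism digraph has an arc the power digraph lacks, for every Hamiltonian group. This is exactly how the paper disposes of the case, and with it your argument is complete.
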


\begin{proof}
If $G$ is cyclic, then all its endomorphisms are power maps, and so the 
digraphs are equal. (If $G=\langle g\rangle$ and the endomorphism $f$ maps
$g$ to $g^m$, then it maps $g^i$ to $g^{im}$ for all $i$.) 

Conversely, suppose the digraphs coincide. Then every subgroup of $G$ is
fully invariant. In particular, every subgroup is normal; in other words,
$G$ is a \emph{Dedekind group}. From the classification of Dedekind groups
\cite{dedekind}, we see that either $G$ is abelian or $G$ has the quaternion
group $Q_8$ as a direct factor. The latter case does not occur since $Q_8$
has an automorphism permuting the three subgroups of order $4$.

If $G$ is abelian but not cyclic, then we see from the classification of
finite abelian groups that we can write $G=A\times B$, where both $|A|$ and
$|B|$ are divisible by a prime $p$. Let $a$ and $b$ be elements of order $p$
in $A$ and $B$ respectively, and $H=\langle ab\rangle$. Then the endomorphism
mapping $A$ to the identity and fixing every element of $B$ maps $H$ to the
disjoint subgroup $\langle b\rangle$. So this case also doesn't occur.

Thus $G$ is cyclic.
\end{proof}

Mikhail Volkov has drawn my attention to two papers
\cite{maz1,maz2} by Ryszard Mazurek. These papers concern semigroups all of
whose endomorphisms are power maps. On the face of it, this is stronger than
asking that every endomorphism maps each element to some power of itself,
since they require it to be the same power in each case. However, as we have
seen above, the two properties are equivalent for groups.

Volkov also provided an example to show that they are not equivalent for all
semigroups. Let $S$ be the semigroup on the set $\{a,b,e\}$ with Cayley table
\begin{center}
\begin{tabular}{c|ccc}
   & $a$ & $b$ & $e$ \\
  \hline
  $a$ & $e$ & $b$ & $e$ \\
  $b$ & $b$ & $e$ & $b$ \\
  $e$ & $e$ & $b$ & $e$
\end{tabular}.
\end{center}
$S$ is the amalgamation of the 2-element group $\{b,e\}$ and the  2-element zero semigroup $\{a,e\}$ over the common idempotent $e$.

The endomorphisms of $S$ are exactly the following four maps:
\[
\begin{array}{c|ccc}
          & a & b & e\\ \hline
\varphi_1 & a & b & e\\
\varphi_2 & e & b & e\\
\varphi_3 & a & e & e\\
\varphi_4 & e & e & e
\end{array}
\]
We see that every endomorphism of $S$ maps each element to a power of itself. However, the endomorphism $\varphi_3$ is not a power map. Indeed, the equality $\varphi_3(x)=x^k$ fails for $x=b$ if $k=1$ and for $x=a$ if $k>1$.

\medskip

\noindent\textbf{Question} Can anything be said about semigroups in which any
endomorphism which maps every element $x$ to some power of $x$ is a power map?
(As far as I know, even the groups satisfying this are not classified.)

\section{Simplicial complexes}

A \emph{simplicial complex} is a family $\mathcal{C}$ of finite subsets of a
ground set $X$ which is closed under taking subsets. The subsets are called
\emph{simplices}. We assume that every singleton of $X$ is a simplex; other
elements of $X$ are isolated and can be discarded.

Simplicial complexes have a topological interpretation, according to which each
simplex has a (geometric) dimension which is one less than its cardinality: so
elements of $X$ are \emph{vertices}, $2$-element subsets are \emph{edges}, and
so on. Note that the vertices and edges form a graph, known as the
\textit{$1$-skeleton} of~$X$.

Various simplicial complexes can be defined on an algebra; I focus here on
two examples studied for groups in \cite{simplicial}, known as the
independence and the strong independence complexes. The definition of these
extends without change to arbitrary algebras, and the arguments here follow
closely those in \cite{simplicial}.

\subsection{The independence complex}

A subset $S$ of an algebra $A$ is \emph{independent} if
$s\notin\langle S\setminus\{s\}\rangle$ for all $s\in S$. The
\emph{independence complex} of $A$ is the set of of all the
independent subsets of $A$. With this definition, the elements of $E(A)$ are
not contained in any simplex, since if $c\in E(A)$ then
$c\in\langle\emptyset\rangle$. So we remove these.

\begin{prop}
The independence complex is a simplicial complex; its vertices are the
elements of $A\setminus E(A)$. Its $1$-skeleton is the induced subgraph of the
complement of the power graph of $A$ on this set.
\end{prop}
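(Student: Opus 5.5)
The plan is to verify the three claims in turn directly from the definition of independence, using only monotonicity of the closure operator $S\mapsto\langle S\rangle$.

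\emph{Closure under subsets.} Let $S$ be independent and $T\subseteq S$. For $t\in T$ we have $T\setminus\{t\}\subseteq S\setminus\{t\}$, so $\langle T\setminus\{t\}\rangle\le\langle S\setminus\{t\}\rangle$. Since $t\notin\langle S\setminus\{t\}\rangle$, also $t\notin\langle T\setminus\{t\}\rangle$, and hence $T$ is independent. The empty set is vacuously independent, and all simplices are finite because $A$ is finite.

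\emph{The vertices.} A singleton $\{a\}$ is independent if and only if $a\notin\langle\emptyset\rangle=E(A)$. So the singleton simplices are exactly the elements of $A\setminus E(A)$. Moreover, no element of $E(A)$ lies in any simplex: if $c\in E(A)\cap S$, then $c\in\langle\emptyset\rangle\le\langle S\setminus\{c\}\rangle$, so $S$ is not independent. Hence the ground set is exactly $A\setminus E(A)$, matching the convention that every vertex is a simplex.

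\emph{The $1$-skeleton.} Take distinct $x,y\in A\setminus E(A)$. The set $\{x,y\}$ is independent if and only if $x\notin\langle y\rangle$ and $y\notin\langle x\rangle$, which says exactly that $x$ and $y$ are not joined in the power graph. So the edges of the complex are precisely the non-edges of the power graph among the vertices. That is, the $1$-skeleton is the induced subgraph on $A\setminus E(A)$ of the complement of the power graph.

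None of these steps is a real obstacle; the only point needing care is the correct treatment of $E(A)$ via $\langle\emptyset\rangle=E(A)$.
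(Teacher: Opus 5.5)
Your proposal is correct and follows essentially the same route as the paper: subset-closure via monotonicity of $\langle\cdot\rangle$, singletons independent exactly when outside $E(A)=\langle\emptyset\rangle$, and a pair independent exactly when neither element lies in the subalgebra generated by the other. Your extra check that no element of $E(A)$ lies in any simplex is a point the paper makes in the text just before the proposition rather than in its proof.
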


\begin{proof}
Suppose that $S$ is independent and $T\subseteq S$. If
$t\in\langle T\setminus\{t\}\rangle$, then $t\in\langle S\setminus\{t\}\rangle$,
contradicting the independence of $S$; so $T$ is independent.

A $1$-element set $\{x\}$ is independent if and only if $x\notin E(A)$.

A $2$-element set fails to be independent if and only if one of its elements
lies in the algebra generated by the other; that is, they are joined in the
power graph. \qed
\end{proof}

\subsection{The strong independence complex}

The concept of independence in a group has been investigated by a number of
authors. However, I know no literature on the next concept.

A subset $S$ of an algebra $A$ is called \emph{strongly independent} if no
subalgebra of $A$ containing $S$ has fewer than $|S|$ generators.
The \emph{strong independence complex} of $A$ is the complex whose simplexes
are the strongly independent sets.

\begin{prop}
The strong independence complex is a simplicial complex, whose vertex set is
$A\setminus E(A)$. Its $1$-skeleton is the induced subgraph of the complement
of the enhanced power graph of $A$ on this set.
\end{prop}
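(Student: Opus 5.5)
The plan is to verify the three claims directly from the definitions, closely following the proof of the preceding Proposition for the independence complex. Throughout I read ``$B$ has fewer than $|S|$ generators'' as ``$B$ has a generating set of size less than $|S|$'', that is, $r(B)<|S|$.

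\emph{Closure under subsets.} Let $S$ be strongly independent and $T\subseteq S$, and suppose for contradiction that $T$ is not. Then some subalgebra $B\ge\langle T\rangle$ has a generating set $U$ with $|U|<|T|$. Consider
\[
B'=\langle U\cup(S\setminus T)\rangle .
\]
This subalgebra contains $B$, hence contains $T$, and it also contains $S\setminus T$, so it contains $S$. It is generated by a set of size at most $|U|+|S\setminus T|<|T|+|S\setminus T|=|S|$. This contradicts the strong independence of $S$. The key point is that we may enlarge the subalgebra by throwing in the remaining elements of $S$, and this costs at most one generator per element.

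\emph{Vertices.} A singleton $\{x\}$ fails to be strongly independent exactly when some subalgebra containing $x$ has rank $0$. The only subalgebra generated by the empty set is $\langle\emptyset\rangle=E(A)$. So $\{x\}$ is a simplex if and only if $x\notin E(A)$, and as in the independence complex we discard the elements of $E(A)$. This shows the vertex set is $A\setminus E(A)$.

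\emph{The $1$-skeleton.} Take distinct $x,y\in A\setminus E(A)$. The pair $\{x,y\}$ fails to be strongly independent if and only if some subalgebra $B$ containing $x$ and $y$ has rank at most $1$.
\begin{itemize}
\item Rank $0$ is impossible, since then $B=E(A)$, which does not contain $x$.
\item So failure means $B=\langle z\rangle$ for some $z$ with $x,y\in\langle z\rangle$. By the loose definition (B) adopted in this paper, this is exactly adjacency of $x$ and $y$ in the enhanced power graph.
\item Conversely, if $x,y\in\langle z\rangle$, then $\langle z\rangle$ witnesses that $\{x,y\}$ is not strongly independent.
\end{itemize}
Hence the edges of the complex are precisely the non-edges of the enhanced power graph on $A\setminus E(A)$.

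The only step needing care is this last one: it must use the loose definition of the enhanced power graph. Under the strict definition (A), for instance in the semigroup example with $x^2,x^3$, the claim would fail, since $\{x^2,x^3\}$ lies in the monogenic $\langle x\rangle$ without $\langle x^2,x^3\rangle$ having rank~$1$.
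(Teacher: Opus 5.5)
Your proof is correct and follows the paper's proof step for step. It uses the same enlarged subalgebra $\langle U\cup(S\setminus T)\rangle$ for closure under subsets, the same fact $\langle\emptyset\rangle=E(A)$ for the vertices, and the loose definition (B) of the enhanced power graph for the $1$-skeleton; your explicit exclusion of the rank-$0$ case for pairs and your $x^2,x^3$ check of the strict definition make explicit what the paper leaves implicit.
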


\begin{proof}
Let $S$ be strongly independent and $T\subseteq S$. Suppose that $T$ is not
strongly independent; so there exists a subalgebra $B$ containing $T$ which
is generated by a set $U$ with $|U|<|T|$. Let $C$ be the subalgebra generated
by $(S\setminus T)\cup U$. Then $C$ contains $T$, and hence $S$; and
the generating set of $C$ is smaller than $S$, contradicting strong
independence of $S$.

It is clear that a singleton $\{x\}$ is strongly independent if and only if
$x\notin E(A)$.

For the last part, suppose that $x$ and $y$ are adjacent in the enhanced power
graph, so that $x,y\in\langle z\rangle$ for some $z$; thus
$\{x,y\}$ is not strongly independent. Conversely, if $\{x,y\}$ is not
strongly independent, then $x,y\in\langle z\rangle$ for some $z$, so $x$ and
$y$ are  adjacent in the enhanced power graph. (Note that this argument uses
the loose definition of enhanced power graph.) 
\end{proof}

Clearly a strongly independent set is independent. This prompts a further
question:
\begin{quote}
For which algebras do the notions of independence and strong independence
coincide?
\end{quote}

The answer is known for groups: the classification of such groups was given
by Lucchini and Stanojkovski~\cite{ls}. The nilpotent examples are just the
\emph{monotone $p$-groups}, those $p$-groups $G$ for which $K\le H\le G$
implies $r(K)\le r(H)$. The non-nilpotent examples are Frobenius groups, whose
Frobenius kernel is abelian and Frobenius complement cyclic of prime power
order (with a further numerical restriction).

Lucchini and Stanojkovski also prove the following theorem for groups;
it would be interesting to know whether a similar result holds for wider
classes of algebras.

\begin{theo}
If $G$ is a finite group whose independence complex is isomorphic to that of
an abelian group, then $G$ is nilpotent, and its Sylow subgroups are modular
and non-Hamiltonian.
\end{theo}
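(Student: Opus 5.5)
The plan is to extract as much numerical information as possible from the isomorphism of complexes, prove nilpotency by counting, and then reduce to a comparison between a $p$-group and an abelian $p$-group. Let $\Phi$ be an isomorphism from the independence complex of $G$ to that of an abelian group $A$.

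\textbf{Step 1 (order statistics).} For groups $E(G)=\{1\}$, so $\Phi$ is a bijection $G\setminus\{1\}\to A\setminus\{1\}$. By the Proposition on the independence complex, $\Phi$ is an isomorphism of $1$-skeleta, which are the complements of the power graphs. Hence the power graphs of $G$ and $A$ on the non-identity elements are isomorphic. Adding back the identity, which is joined to everything, the power graphs of $G$ and $A$ are isomorphic. By Theorem~\ref{t:recon}, the power digraphs are then isomorphic. The closed out-neighbourhood of $x$ is $\langle x\rangle$, so $G$ and $A$ have the same number of elements of each order; in particular $|G|=|A|$.

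\textbf{Step 2 (nilpotency).} Fix a prime $p$ and let $p^a$ be the $p$-part of $|A|=|G|$. In the abelian group $A$ the $p$-elements form the Sylow $p$-subgroup, so there are exactly $p^a$ of them. By Step~1, $G$ also has exactly $p^a$ solutions of $x^{p^a}=1$. Every Sylow $p$-subgroup of $G$ lies in this solution set and already has $p^a$ elements, so the Sylow $p$-subgroup is unique, hence normal. This holds for every $p$, so $G$ is nilpotent. (Frobenius' theorem is not even needed here, only this counting.)

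\textbf{Step 3 (reduction to Sylow subgroups).} Let $P$ be the Sylow $p$-subgroup of $G$ and $A_p$ that of $A$. The power-digraph isomorphism of Step~1 preserves element orders. I would like to arrange that $\Phi$ itself maps $P\setminus\{1\}$ onto $A_p\setminus\{1\}$. Failing that, I would use the fact that the order statistics are determined by the $1$-skeleton to argue that the $p$-elements form a union of classes recognisable in the complex. Since $\langle S\rangle\le P$ for $S\subseteq P$, independence of subsets of $P$ is the same in $P$ as in $G$, so the induced subcomplex on $P\setminus\{1\}$ is the independence complex of $P$. Thus $P$ has the independence complex of the abelian $p$-group $A_p$, with the same order statistics.

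\textbf{Step 4 ($P$ is modular and non-Hamiltonian).} Non-Hamiltonian: if $P$ were Hamiltonian ($p=2$), then $P\cong Q_8\times E$ with $E$ elementary abelian. The number of involutions of $P$ would be $2|E|-1$, with $|P|=8|E|$. An abelian $2$-group of this order with this many involutions has $2$-rank $\log_2|E|+1$. I would compare the numbers of elements of order $4$, or the largest independent sets, and expect these to be incompatible; for $E=1$ it is immediate, since $C_8$ has elements of order $8$ while $Q_8$ does not.

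Modular: I would use Iwasawa's characterisation. A $p$-group is modular and non-Hamiltonian if and only if it has no section isomorphic to $D_8$ (for $p=2$) or to the nonabelian group of order $p^3$ and exponent $p$ (for $p$ odd). The intended invariant is the maximum size of an independent set contained in the out-neighbourhood closure of a few elements. In an abelian $p$-group, independent subsets of a subgroup $H$ have size at most $r(H)$, because abelian $p$-groups are monotone. In an extraspecial group $p^{1+2}_+$, by contrast, the complex is smaller than that of $C_p^3$ ($2$-generated versus rank $3$). So I would try to show that a forbidden section forces a local configuration in the complex that never occurs in an abelian $p$-group. For example, for $p$ odd I would look at three pairwise independent elements $x,y,z$ of order $p$ such that $\{x,y,z\}$ is not independent, although in every abelian group with the matching order data it would be.

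\textbf{Main obstacle.} Sections are not subgroups, and the complex does not obviously see quotients. I expect Step~4 to be the hard part: converting ``no forbidden section'' into a statement detectable from independent sets of elements. I would first try an induction on $|P|$, choosing a minimal counterexample in which a nonmodular subgroup is itself of the forbidden shape or close to it.
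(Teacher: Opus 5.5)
The paper gives no proof of this theorem; it is quoted from Lucchini and Stanojkovski, so your attempt has to stand on its own.

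Steps~1 and~2 are correct and prove nilpotency completely. The $1$-skeleton recovers the power graph. Theorem~\ref{t:recon} then gives equal order statistics, and counting $p$-elements makes every Sylow subgroup normal.

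The non-Hamiltonian claim, which you leave as an expectation, can be finished with the same data. The power-digraph isomorphism $\Psi\colon G\to A$ preserves orders, so it restricts to a power-digraph isomorphism $P\to A_2$. Suppose $P\cong Q_8\times E$ with $|E|=2^e\ge 2$. Then the order statistics force $A_2\cong C_4^2\times C_2^{e-1}$. In that group three different involutions are squares of elements of order $4$, whereas in $Q_8\times E$ only $(-1,1)$ is. The number of involutions that are out-neighbours of elements of order $4$ is a digraph invariant, so this is a contradiction.

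\textbf{The gap: modularity.} This is the substance of the theorem, you give no argument for it, and the sketched route cannot work as described.

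First, nothing determined by the $1$-skeleton can suffice. For odd $p$, the extraspecial group $p^{1+2}_+$ of exponent $p$ and $C_p^3$ have isomorphic power graphs and power digraphs. Their complexes differ only because $p^{1+2}_+$, all of whose subgroups are $2$-generated, has no independent triples. So you need higher simplices, and you need to transport them between $G$ and $A$. But $\Phi$ preserves neither orders nor subgroups: in a cyclic $p$-group the complex has no edges, so every bijection is an isomorphism. On the other hand, the order-preserving map $\Psi$ need not preserve $2$-simplices, since a power-digraph automorphism of $C_p^3$ may permute its subgroups of order $p$ arbitrarily. For the same reason, the conclusion of Step~3 (that $P$ and $A_p$ have isomorphic complexes) is unsupported.

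Second, the configuration you propose is not an obstruction. Take $x,y$ of order $p$ generating distinct subgroups. Then $\{x,y,xy\}$ is pairwise independent but dependent in every abelian group of $p$-rank at least $2$.

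Third, your version of Iwasawa's criterion is false as stated: $Q_8$ is Hamiltonian yet has no section isomorphic to $D_8$. The forbidden-section criterion characterises modularity alone, with Hamiltonian groups included.

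What is missing is a feature of the complex that is invariant under arbitrary isomorphisms and recovers enough of the subgroup lattice to detect a non-modular section. That means at least joins of cyclic subgroups, and some access to quotients.
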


\medskip

I end this section with a remark and a question. The homology groups of the
independence and strong independence complexes of an algebra $A$ afford
representations of the automorphism group of $A$. What can be learned about
$A$ by studying these representations? (See \cite{vick} for homology groups.)

\section{Independence algebras}

\emph{Independence algebras} form an important class of algebras with rich
endomorphism monoids: see \cite{abckk} for a recent survey. These are algebras
in which
\begin{itemize}
\item the subalgebras have the exchange property (that is, are flats of a
matroid), so all bases (minimal generating sets) have the same cardinality;
\item any map from a basis into the algebra extends uniquely to an endomorphism
of the algebra.
\end{itemize} 
Note that precise details of the operations are not required; the class of
independence algebras is closed under \emph{SE-equivalence}, that is, bijection
preserving subalgebras and endomorphisms. Independence algebras are classified
uo to SE-equivalence: they are based on groups acting on sets, ``blown up'' by
Boolean algebras; vector spaces; affine spaces; and sharply $2$-transitive
groups.  Clearly these algebras are monotonic, indeed strictly monotonic
(that is, $B_1<B_2$ implies $r(B_1)<r(B_2)$) abd have the MO property.

The graphs and digraphs defined here are rather trivial for
independence algebras, though the simplicial complexes are more interesting.

\subsection{Graphs and digraphs}

Let $A$ be an independence algebra. In the corresponding matroid, the elements
of $E(A)$ are the loops, and the sets
$\langle x\rangle^-=\langle x\rangle\setminus E(A)$ are the classes of
parallel elements. So the power digraph has all arcs between elements of
$E(A)$, and all arcs $x,y$ with $y\in\langle x\rangle$. Thus the power graph,
enhanced power graph, and intersection power graph are all equal, and are
``sunflowers'' consisting of complete graphs on the sets $\langle x\rangle$,
any two sharing the vertices of $E(A)$.

The independence and rank graphs are equal, and are the complement of the power
graph: that is, complete multipartite on $A\setminus E(A)$ with partite sets
$\langle x\rangle^-$.

The generating graph is as follows:
\begin{itemize}
\item null, if $r(A)>2$;
\item equal to the independence graph, if $r(A)=2$;
\item complete on $A$ with edges in $E(A)$ deleted, if $r(A)=1$.
\end{itemize}

Any element of $A\setminus E(A)$ is part of a basis, whereas elements of
$E(A)$ are fixed by all endomorphisms. So the arcs of the endomorphism 
digraph are all those with source in $A\setminus E(A)$.

\subsection{Simplicial complexes}

For independence algebras, the independence complex and strong independence
complex coincide, and the simplices are the independent sets in the matroid
associated with the algebra. So they determine the subalgebra structure. Do
they determine the endomorphisms?

Another question that suggests itself is: if the independence complex of an
algebra is a matroid, is the algebra an independence algebra?

Negative examples to both questions can be obtained as follows. Let $Q$ be a
\emph{quasigroup} of order $n$, a set with a binary operation whose operation
table is a Latin square. We construct a new algebra $Q^{(1)}$ with $n$ unary
operations, these being the right multiplications in $Q$ (the maps
$\rho_a:x\mapsto xa$). This algebra has just two subalgebras, the empty set
and the whole of $Q$, so its independence complex is a (trivial) matroid.
Now the endomorphisms are all automorphisms, and they are the permutations
which commute with all the right multiplications. It is well known that, if
this group is transitive, then $Q$ is a group, and the automorphism group of
$Q^{(1)}$ is the set of left multiplications in $Q$. Thus, we have an
independence algebra if and only if $Q$ is a group; and different groups give
different independence algebras.

The two questions above may  be more interesting for algebras of larger rank.

\subsection*{Acknowledgement}

I am grateful to Daniela Bubboloni for sending me a copy of her paper and for
helpful comments which have improved the presentation of this paper, and to
Mikhail Volkov for drawing my attention to the papers of Mazurek, for providing
an example, and for several further comments.

\end{document}